\documentclass{amsart}

\usepackage{mathrsfs}

\usepackage{bbm}

\newtheorem{Thm}{Theorem}[section]
\newtheorem{Lem}[Thm]{Lemma}

\theoremstyle{definition}

\theoremstyle{remark}


\begin{document}

\title[the sum of a prime and a square-full number in short intervals]{Asymptotic formula for the sum of a prime and a square-full number in short intervals}
\author[F.Ogihara and Y.Suzuki]{Fumi Ogihara and Yuta Suzuki}
\maketitle\thispagestyle{empty}



\keywords{}

\date{}

\dedicatory{}

\begin{abstract}
	Let $R_{m, \, \textup{sq-full}}(N)$ be a representation function for the sum of a prime and a square-full number.  
	In this article, we prove an asymptotic formula for the sum of $R_{m, \, \textup{sq-full}}(N)$ over positive integers $N$ in a short interval ($X$, $X+H$] of length $H$ slightly bigger than $X^{\frac{1}{2}}$.
	
\end{abstract}

\section{Introduction}
	As one of the well-known Hardy--Littlewood conjectures, it is conjectured that every sufficiently large positive number $N$ is a square or it can be expressed as the sum of a prime and a square. In other words, 
	$$
		N = n^{2} \quad \mbox{or} \quad p + n^{2}, 
	$$
	where $p$ denotes a prime number and $n$ denotes a positive integer.
	
	To this conjecture, an effective approach to count $N$ expressible as $p+n^{2}$ is to use the following function
	\begin{equation}\label{R_(p, sq)(N)}
		R_{p, \, \textup{sq}}(N) = \sum_{p+n^{2}=N} \log p
	\end{equation}
	or
	\begin{equation} \label{R_(m, sq)(N)}
		R_{m, \, \textup{sq}}(N) = \sum_{m+n^{2}=N} \Lambda(m), 
	\end{equation}
	where $\Lambda(m)$ is the von Mangoldt function given by
	\begin{equation*}
		\Lambda(m) = 
			\begin{cases}
				\log p & \mbox{if } m=p^{k} \mbox{ for some prime $p$ and some integer } k \ge 1, \\
				0 & \mbox{otherwise}. 
			\end{cases}
	\end{equation*}
	We call the functions such as ($\ref{R_(p, sq)(N)}$) and ($\ref{R_(m, sq)(N)}$) the representation functions.

	In \cite{H-L}, Hardy and Littlewood obtained a hypothetical asymptotic formula for $N$ not being a square,
	\begin{equation} \label{H-L_AP}
		R_{m, \, \textup{sq}}(N) = \mathfrak{S} \sqrt{N} + \mbox{(error)},
	\end{equation}
	where
	$$
		\mathfrak{S} = \mathfrak{S} (N) = \prod_{p > 2} \bigg( 1 - \frac{(N/p)}{p-1} \bigg)
	$$
	with ($N/p$) being the Legendre symbol. 
	It is difficult to obtain ($\ref{H-L_AP}$) itself, but it may be possible to consider the short interval average of the representation functions: 
	\begin{equation*} 
		\sum_{X < N \le X+H} R_{m, \, \textup{sq}}(N)
	\end{equation*}
	for $X$, $H$ being real numbers with $ 4 \le H \le X$. 
	
	In 2017, A. Languasco and A. Zaccagnini showed that
	
	\begin{Thm} {\rm (Languasco--Zaccagnini\cite{L-Z})}.
		For real numbers $X$, $H$ with $4 \le H \le X$ and any real number $\varepsilon >0$, there exists a constant $C>0$ such that
		\begin{equation} \label{L-Zresult}
			\sum_{X < N \le X+H} R_{m, \, \textup{sq}}(N) = HX^{\frac{1}{2}}+O \bigg( HX^{\frac{1}{2}} \exp \bigg( -C \bigg(\frac{\log X}{\log \log X} \bigg)^{\frac{1}{3}} \bigg) \bigg)
		\end{equation}
		provided 
		\begin{equation*} 
			X^{\frac{1}{2}} \exp \Big( -C \Big(\frac{\log X}{\log \log X} \Big)^{\frac{1}{3}} \Big) \le H \le X^{1- \varepsilon}.
		\end{equation*}
	\end{Thm}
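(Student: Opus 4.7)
The plan is to apply the Hardy--Littlewood circle method in short-interval form. With $e(x) = e^{2\pi i x}$, introduce the exponential sums
\begin{equation*}
S(\alpha) = \sum_{m \le X+H} \Lambda(m) e(m\alpha), \quad V(\alpha) = \sum_{n^2 \le X+H} e(n^2 \alpha), \quad K_H(\alpha) = \sum_{k=1}^{H} e(k\alpha),
\end{equation*}
so that orthogonality gives
\begin{equation*}
\sum_{X < N \le X+H} R_{m,\textup{sq}}(N) = \int_0^1 S(\alpha) V(\alpha) K_H(\alpha) e(-X\alpha)\, d\alpha.
\end{equation*}
I would then dissect the unit interval into major arcs $\mathfrak{M}$ and minor arcs $\mathfrak{m}$ via a Farey dissection with parameters $P$, $Q$ to be chosen in the course of the argument.

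On the major arcs $\alpha = a/q + \beta$ with $q \le P$, I would approximate $S(a/q + \beta)$ by $\frac{\mu(q)}{\phi(q)} T(\beta)$, where $T(\beta) = \sum_{m \le X+H} e(m\beta)$, using the explicit formula for $\psi(x;q,a)$ combined with the Vinogradov--Korobov zero-free region for Dirichlet $L$-functions; this is precisely where the factor $\exp(-C(\log X/\log\log X)^{1/3})$ enters the final error. Simultaneously, Poisson summation yields $V(a/q + \beta) \approx \frac{G(a,q)}{q} W(\beta)$, where $G(a,q)$ is the quadratic Gauss sum and $W(\beta) = \int_0^{\sqrt{X+H}} e(\xi^2\beta)\, d\xi$. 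Substituting these approximations, the major arcs contribute an arithmetic factor---a truncated singular series, whose sum over $X < N \le X+H$ equals $H(1+o(1))$---times an analytic integral which evaluates to $X^{1/2}(1+o(1))$, producing the main term $HX^{1/2}$.

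The principal obstacle is the minor arcs estimate. A direct Cauchy--Schwarz bound
\begin{equation*}
\left|\int_{\mathfrak{m}} S(\alpha) V(\alpha) K_H(\alpha) e(-X\alpha)\, d\alpha\right| \le \sup_{\alpha \in \mathfrak{m}} |V(\alpha)| \cdot \Bigl(\int_0^1 |S|^2\, d\alpha\Bigr)^{1/2} \Bigl(\int_0^1 |K_H|^2\, d\alpha\Bigr)^{1/2},
\end{equation*}
combined with the Parseval estimates $\int_0^1 |S|^2 \ll X\log X$ and $\int_0^1 |K_H|^2 = H$, together with Weyl's inequality giving $|V(\alpha)| \ll \sqrt{X/q}(\log X)^{O(1)}$ on minor arcs, yields an error of order $X\sqrt{H/P}(\log X)^{O(1)}$, which is not strong enough without pushing $P$ beyond what the zero-free region permits. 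A more refined treatment---for instance, writing $V = V^* + (V - V^*)$ where $V^*$ is the globally defined theta-type approximation extending the major-arc model to all of $[0,1]$, and exploiting a sharp bound on $\|V - V^*\|_2$---is needed to match the main term. The two-sided hypothesis $X^{1/2}\exp(-C(\log X/\log\log X)^{1/3}) \le H \le X^{1-\varepsilon}$ is precisely the range in which this balance between the Vinogradov--Korobov major-arc saving and the refined minor-arc estimate succeeds.
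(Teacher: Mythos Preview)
This theorem is not proved in the present paper; it is quoted from Languasco--Zaccagnini \cite{L-Z} as background, with no argument supplied. There is therefore no in-paper proof against which to compare your proposal.

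Regarding the proposal on its own terms: the circle-method framework is the right one, and your identification of the Vinogradov--Korobov zero-free region as the source of the saving $\exp(-C(\log X/\log\log X)^{1/3})$ is correct. But what you have written is a plan rather than a proof: the minor-arc paragraph openly concedes that the direct Cauchy--Schwarz bound is insufficient and only gestures at ``a more refined treatment'' without executing it, so the argument as stated has a genuine gap at precisely the hard step. For comparison, the actual Languasco--Zaccagnini argument differs from your outline in execution: they work with exponentially smoothed sums $\widetilde{S}(\alpha)=\sum_{n\ge 1}\Lambda(n)e^{-n/N}e(n\alpha)$ and $\omega_2(\alpha)=\sum_{n\ge 1}e^{-n^2/N}e(n^2\alpha)$, and the decisive input is an $L^2$ bound for $\widetilde{S}(\alpha)-1/(1/N-2\pi i\alpha)$ over a short segment of~$\alpha$, expressed through the zeros of $\zeta(s)$; there is no Farey dissection in the classical sense, and this $L^2$ mechanism is what makes the short-interval range work. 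Finally, a small slip: with $K_H(\alpha)=\sum_{k=1}^{H}e(k\alpha)$ and the factor $e(-X\alpha)$, orthogonality picks out $m+n^2=X-k$, i.e.\ $X-H\le N<X$, not $X<N\le X+H$; the sign in the kernel should be reversed.
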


	In 2023, Y. Suzuki improved the above theorem: 
	
	\begin{Thm} {\rm (Suzuki\cite{Suzuki})}.
		For $X^{\frac{32-4 \sqrt{15}}{49} + \varepsilon } \le H \le X^{1- \varepsilon}$, the asymptotic formula (\ref{L-Zresult}) holds. 
	\end{Thm}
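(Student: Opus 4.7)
My plan is to attack this by the Hardy--Littlewood circle method, following the general setup of Languasco--Zaccagnini. Introduce the generating functions
$$
S(\alpha) = \sum_{m \le X+H} \Lambda(m) e(m\alpha), \quad T(\alpha) = \sum_{n^{2} \le X+H} e(n^{2}\alpha), \quad V_H(\alpha) = \sum_{X < N \le X+H} e(-N\alpha),
$$
so that the left-hand side of (\ref{L-Zresult}) equals $\int_{0}^{1} S(\alpha) T(\alpha) V_{H}(\alpha) \, d\alpha$. Dissect $[0,1] = \mathfrak{M} \cup \mathfrak{m}$, with $\mathfrak{M}$ consisting of Farey arcs around rationals $a/q$ of denominator $q \le Q = X^{\eta}$ for a parameter $\eta > 0$ to be optimized at the end.

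On the major arcs I would substitute the classical factorizations $S(a/q + \beta) = \mu(q)\varphi(q)^{-1} \sum_{m \le X+H} e(m\beta) + E_{S}$ and $T(a/q + \beta) = q^{-1} G(a,q) \int_{0}^{\sqrt{X+H}} e(u^{2}\beta) \, du + E_{T}$, where $G(a,q)$ is the quadratic Gauss sum. The error $E_{S}$ is controlled by a zero-density estimate for Dirichlet $L$-functions in the manner of Languasco--Zaccagnini, and the factor $V_{H}(a/q + \beta) \ll \min(H, \|\beta\|^{-1})$ supplies the necessary decay. After integration the singular series collapses to $1$ and the expected main term $H\sqrt{X}$ drops out, provided $Q$ is taken small enough relative to $H$.

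The main obstacle is the minor arc contribution. The naive Cauchy--Schwarz bound
$$
\int_{\mathfrak{m}} |S T V_{H}| \, d\alpha \le \Bigl(\sup_{\alpha \in \mathfrak{m}} |S(\alpha)|\Bigr) \, \|T\|_{2} \, \|V_{H}\|_{2} \ll \bigl(\sup_{\mathfrak{m}}|S|\bigr) \cdot X^{1/4} H^{1/2},
$$
combined with a Vinogradov-type pointwise bound on $S$, recovers only the Languasco--Zaccagnini range. To push $H$ below the $X^{1/2}$ barrier, I would decompose $\Lambda$ via Heath-Brown's identity into Type I and Type II bilinear sums, pair each bilinear form directly with $T(\alpha)$, and exploit the lacunarity of the squares through an $L^{4}$-norm estimate for $T(\alpha)V_{H}(\alpha)$ restricted to the minor arcs. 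The Type I pieces yield to Weyl differencing on the squares, while the Type II pieces rely on the quartic moment to absorb the extra summation variable. The peculiar exponent $(32-4\sqrt{15})/49$ should then emerge from optimization: equating three competing error contributions---major arc width $Q$, Type I cutoff, and Type II splitting---produces a quadratic condition, and $(32-4\sqrt{15})/49$ is its smaller root.
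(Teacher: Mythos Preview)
This theorem is not proved in the present paper at all: it is quoted in the introduction as a prior result of Suzuki~\cite{Suzuki} and serves only as background for the square-full variant that the paper actually establishes (Theorem~\ref{main}). There is therefore no ``paper's own proof'' against which to compare your sketch.

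That said, a brief remark on your plan itself. The circle-method skeleton you describe---generating functions $S$, $T$, $V_H$, Farey dissection, zero-density input on the major arcs---is indeed the framework in which Languasco--Zaccagnini worked, and Suzuki's improvement does proceed by refining the minor-arc treatment beyond the crude Cauchy--Schwarz--Vinogradov bound. Your intuition that a Heath-Brown decomposition and mean-value information for $T$ are the levers is in the right direction, and that the exponent $(32-4\sqrt{15})/49$ arises from balancing competing error terms. However, your sketch remains at the level of a strategy rather than a proof: you have not specified the ranges for the Type~I/Type~II cutoffs, nor stated the precise $L^4$ (or other moment) bound for $T V_H$ you intend to invoke, nor shown the optimization that produces the quadratic whose root is $(32-4\sqrt{15})/49$. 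Any genuine verification would require consulting \cite{Suzuki} directly, since the present paper gives no argument for this statement.
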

	
	In this article, we study a variant of the above problem, where we replace squares with square-full numbers. 
	Recall that a positive integer $n$ is square-full if a prime $p$ divides $n$, then $p^{2}$ also divides $n$. 
	
	Let us mention some known results about square-full numbers.
	Let $Q(x)$ be a counting function of square-full numbers less than or equal to $x$.
	Then we have following the two theorems.
	\begin{Thm} {\rm (Bateman--Grosswald\cite{B-G})}. \label{Thm_B-G}
		For a real number $x \ge 1$, we have
		$$
			Q(x) = \frac{\zeta(\frac{3}{2})}{\zeta(3)}x^{\frac{1}{2}} + \frac{\zeta(\frac{2}{3})}{\zeta(3)}x^{\frac{1}{3}} + O(x^{\frac{1}{6}}) .
		$$
	\end{Thm}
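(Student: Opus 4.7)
The starting point is the unique factorization of any square-full $n$ as $n = a^{2} b^{3}$ with $a, b$ positive integers and $b$ squarefree, which is equivalent to the Euler-product identity
\[
	\sum_{n \text{ square-full}} \frac{1}{n^{s}} \;=\; \frac{\zeta(2s)\,\zeta(3s)}{\zeta(6s)}.
\]
The simple poles of this Dirichlet series at $s = 1/2$ and $s = 1/3$ account for the two main terms of the theorem. Expanding the Euler product also yields the pointwise identity $q(n) = \sum_{a^{2} b^{3} c^{6} = n} \mu(c)$, where $q$ is the indicator function of square-full integers.

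I would proceed elementarily in two steps. First, let $\rho(y)$ denote the number of pairs of positive integers $(a, b)$ with $a^{2} b^{3} \le y$, whose generating Dirichlet series is $\zeta(2s)\zeta(3s)$. Applying the Dirichlet hyperbola method with a parameter $A$ and setting $B := (y/A^{2})^{1/3}$ so that $A^{2} B^{3} = y$, I obtain
\[
	\rho(y) \;=\; \sum_{a \le A} \lfloor (y/a^{2})^{1/3} \rfloor + \sum_{b \le B} \lfloor (y/b^{3})^{1/2} \rfloor - \lfloor A \rfloor \lfloor B \rfloor.
\]
After removing the floor functions via the partial-sum expansions
\[
	\sum_{a \le A} a^{-2/3} = 3 A^{1/3} + \zeta\!\left(\tfrac{2}{3}\right) + O(A^{-2/3}), \qquad \sum_{b \le B} b^{-3/2} = \zeta\!\left(\tfrac{3}{2}\right) - 2 B^{-1/2} + O(B^{-3/2}),
\]
the $A$- and $B$-dependent secondary pieces of the two main sums cancel exactly against the overlap $AB = y^{1/3} A^{1/3}$, leaving main terms $\zeta(3/2)\, y^{1/2}$ and $\zeta(2/3)\, y^{1/3}$. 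Combined with a nontrivial bound on the residual sawtooth sums, this establishes $\rho(y) = \zeta(3/2)\, y^{1/2} + \zeta(2/3)\, y^{1/3} + O(y^{1/6})$. Second, the M\"obius identity above rearranges into
\[
	Q(x) \;=\; \sum_{c \le x^{1/6}} \mu(c)\, \rho(x/c^{6}),
\]
and substituting the asymptotic for $\rho$, together with $\sum_{c \ge 1} \mu(c)/c^{3} = 1/\zeta(3)$ and $\sum_{c \ge 1} \mu(c)/c^{2} = 1/\zeta(2)$ (the tails beyond $c = x^{1/6}$ contributing admissible errors), recovers the two main terms of the theorem.

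The principal obstacle is securing the $O(y^{1/6})$ error in the formula for $\rho(y)$. Trivially replacing $\lfloor \cdot \rfloor$ by its continuous part introduces an $O(A + B)$ error that, under the balanced choice $A = B = y^{1/5}$, yields only $O(y^{1/5})$. To reach the sharper bound one must estimate the sawtooth sum $\sum_{a \le A} \psi((y/a^{2})^{1/3})$ nontrivially, where $\psi(u) := u - \lfloor u \rfloor - 1/2$; the standard device is to truncate its Fourier expansion and invoke the Kusmin--Landau / van der Corput first-derivative test to bound the resulting exponential sums, exploiting the smoothness of the phase $a \mapsto (y/a^{2})^{1/3}$. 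The contribution of the $\rho$-error to $Q$ is then at most $\sum_{c \le x^{1/6}} (x/c^{6})^{1/6} \ll x^{1/6} \log x$, and a slightly more careful grouping of terms removes the stray logarithm to yield the claimed bound $O(x^{1/6})$.
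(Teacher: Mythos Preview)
The paper does not prove this theorem; it is quoted as a classical result of Bateman and Grosswald (and in the form with error $O(x^{1/6})$ it already goes back to Erd\H{o}s and Szekeres), so there is no proof in the paper to compare your attempt against. Your sketch is the standard argument and is correct in outline: the M\"obius decomposition $Q(x)=\sum_{c}\mu(c)\,\rho(x/c^{6})$ combined with a hyperbola evaluation of $\rho(y)$ is exactly the classical route, and you have correctly isolated the only genuine difficulty, namely that the trivial hyperbola error for $\rho$ is $O(y^{1/5})$ and must be sharpened via a van der Corput--type exponential-sum estimate on the sawtooth sum to reach $O(y^{1/6})$.

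One incidental observation: your computation in fact produces the second main term with coefficient $\zeta(2/3)/\zeta(2)$, which is the correct constant --- one sees this either from the residue of $\zeta(2s)\zeta(3s)/\zeta(6s)$ at $s=1/3$, where $\zeta(6s)$ evaluates to $\zeta(2)$, or directly from your M\"obius sum $\sum_{c\ge 1}\mu(c)c^{-2}=1/\zeta(2)$. The $\zeta(2/3)/\zeta(3)$ printed in the statement is a typo; since the paper only ever uses the leading term $\zeta(3/2)/\zeta(3)\,x^{1/2}$ of this theorem (in the evaluation of $\Sigma_{2}$), the slip has no effect on the main result.
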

	\begin{Thm} {\rm (Filaseta--Trifonov\cite{F-T})}. \label{Thm_F-T}
		Under $x^{\frac{1}{2}+ \frac{5}{39} +\varepsilon} \le H \le x$, we have
		\begin{equation*} 
			Q(x+H) - Q(x) \sim \frac{\zeta(\frac{3}{2})}{2\zeta(3)}Hx^{- \frac{1}{2}}.
		\end{equation*}
	\end{Thm}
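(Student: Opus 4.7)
The plan is to exploit the unique parametrization $n = a^{2} b^{3}$ with $b$ squarefree, which converts the short-interval count into
\[
Q(x+H) - Q(x) = \sum_{\substack{b \ge 1 \\ b \textup{ squarefree}}} \#\bigl\{ a \ge 1 : x < a^{2} b^{3} \le x+H \bigr\}.
\]
For each $b$, the inner count is the number of integers in the interval $I_{b} := (\sqrt{x/b^{3}}, \sqrt{(x+H)/b^{3}}]$, whose length is approximately $H/(2 b^{3/2} x^{1/2})$. I would split the $b$-sum at the natural threshold $B_{0} := (H/x^{1/2})^{2/3}$, at which $|I_{b}|$ drops below $1$, and handle the two ranges separately.

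For $b \le B_{0}$, the inner count equals $|I_{b}| + O(1)$. Summing over squarefree $b$ and using $\sum_{b \textup{ squarefree}} b^{-3/2} = \zeta(3/2)/\zeta(3)$ yields
\[
\sum_{\substack{b \le B_{0} \\ b \textup{ squarefree}}} \frac{H}{2 b^{3/2} x^{1/2}} + O(B_{0}) = \frac{\zeta(3/2)}{2 \zeta(3)} \cdot \frac{H}{x^{1/2}} + O\!\left( \frac{H}{x^{1/2} B_{0}^{1/2}} \right) + O(B_{0}).
\]
Both error terms are of order $(H/x^{1/2})^{2/3}$, which is $o(H/x^{1/2})$ whenever $H \gg x^{1/2}$, delivering the desired main term.

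The serious obstacle is the range $B_{0} < b \le (x+H)^{1/3}$, in which at most one $a$ per $b$ can contribute, and where one must establish
\[
N := \#\bigl\{ b \in (B_{0}, (x+H)^{1/3}] : b \textup{ squarefree and } I_{b} \textup{ contains an integer} \bigr\} = o(H/x^{1/2})
\]
under the hypothesis $H \ge x^{1/2 + 5/39 + \varepsilon}$. The trivial bound $N \ll x^{1/3}$ only suffices for $H \gg x^{5/6}$, so a genuine lattice-point saving is required. I would therefore apply the divided-difference method of Filaseta and Trifonov to the smooth curve $f(t) = (x/t^{3})^{1/2}$, whose successive derivatives obey $f^{(k)}(t) \asymp x^{1/2} t^{-3/2 - k}$. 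Working dyadically in $b$ and forming $k$-th order divided differences of $f$ at integer arguments, one exploits the tension that if $f$ lands too often near an integer then these (rapidly decaying) divided differences are simultaneously forced to be small and to encode a cluster of near-lattice points; this yields a nontrivial cap on the count. The admissible exponent $5/39$ emerges from jointly optimising the divided-difference order $k$, the dyadic scale of $b$, and the required saving over $H/x^{1/2}$, and is the same exponent that controls gaps between consecutive square-full integers.

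Combining the asymptotic obtained from $b \le B_{0}$ with the bound $N = o(H/x^{1/2})$ from the dyadic lattice-point analysis yields the claimed asymptotic. The hard part is clearly the second step: all the subtle number-theoretic input is concentrated in the divided-difference estimate, while the main-term analysis is essentially a Dirichlet hyperbola-type computation.
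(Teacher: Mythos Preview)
The paper does not prove this theorem at all: Theorem~1.4 is quoted from Filaseta--Trifonov~\cite{F-T} as a known input (used only to motivate the heuristic~(\ref{useF-T})), and no argument for it appears anywhere in the text. So there is nothing to compare your proposal against on the paper's side.

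That said, your sketch is a faithful outline of the original Filaseta--Trifonov proof: the parametrisation $f=a^{2}b^{3}$ with $b$ squarefree, the split at $B_{0}\asymp (H x^{-1/2})^{2/3}$ where $|I_{b}|$ crosses $1$, the elementary main-term extraction for small $b$, and the divided-difference lattice-point argument for large $b$ are exactly the ingredients of~\cite{F-T}. The only caveat is that the last step is not a black box---obtaining the specific exponent $5/39$ requires a careful iteration of finite-difference operators of several orders combined with a gap/cluster dichotomy, and your description (``jointly optimising the divided-difference order $k$, the dyadic scale of $b$, and the required saving'') hides all of that work. If this were a theorem the paper actually set out to prove, you would need to supply those details; as it stands, the paper simply cites the result, so no proof is expected here.
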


	In 2002, O. Trifonov improved the exponent of $x$ in Theorem $\ref{Thm_F-T}$ from $\frac{5}{39}$ to $\frac{19}{154}$ (\cite{T}).
	
	It follows easily that
	\begin{equation} \label{sq_are_sqfull}
		R_{p, \, \textup{sq}}(N) = \sum_{m+n^{2}=N} \Lambda(m) \le \sum_{\substack{m+f=N \\ f : \textup{square-full}}}\Lambda(m)
	\end{equation}
	since squares are square-full numbers.
	In this article, we consider the evaluation of the most-right-hand side of ($\ref{sq_are_sqfull}$): 
	$$
		R_{m, \, \textup{sq-full}}(N) := \sum_{\substack{m+f=N \\ f \in \mathscr{Q} }} \Lambda(m),
	$$
	where $\mathscr{Q} := \{ n \in \mathbb{N} \, | \, n \mbox{ is square-full} \} $.
	Especially, we consider the short interval average of the $R_{m, \, \textup{sq-full}}(N)$.
	An easy calculation shows
	\begin{equation} \label{caluR_(m, sq-full)(N)}
		\sum_{X < N \le X+H} R_{m, \, \textup{sq-full}}(N)
			= \hspace{-1em} \sum_{\substack{X < m+f \le X+H \\ f \in \mathscr{Q}}} \hspace{-1em} \Lambda(m) 
			= \sum_{m \le X+H} \Lambda(m) \hspace{-1em} \sum_{\substack{X-m < f \le X-m+H \\ f \in \mathscr{Q}}} 1 .
	\end{equation}
	
	By using Theorem \ref{Thm_F-T}, for $x^{\frac{1}{2}+ \theta +\varepsilon} \le H \le x$ ($ \theta = \frac{5}{39}, \, \frac{19}{154}$), (\ref{caluR_(m, sq-full)(N)}) roughly becomes
	\begin{equation} \label{useF-T}
			\begin{split}
				\sum_{X < N \le X+H} R_{m, \, \textup{sq-full}}(N) &\sim \sum_{m \le X+H} \Lambda(m) \bigg( \frac{\zeta(\frac{3}{2})}{2\zeta(3)}HX^{- \frac{1}{2}} \bigg) \\
									  &\sim \frac{\zeta(\frac{3}{2})}{\zeta(3)}HX^{\frac{1}{2}} \quad (X \to \infty).
			\end{split}
	\end{equation}
	
	Our main theorem is the following, that is we improve the range of $H$ in (\ref{useF-T}) to $\theta = 0$:
	\begin{Thm} \label{main}
		For $X^{\frac{1}{2}+\varepsilon} \le H \le X^{1- \varepsilon}$, we have
		$$
			\sum_{X < N \le X+H} R_{m, \;\textup{sq-full}}(N) = \frac{\zeta(\frac{3}{2})}{\zeta(3)}HX^{\frac{1}{2}} \bigg( 1 + O \Big( (\log X)^{-1} \Big) \bigg) .
		$$
	\end{Thm}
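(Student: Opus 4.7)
The plan is to exchange the order of summation and to use the unique factorization $f = a^2 b^3$ of a square-full number with $b$ squarefree, which converts the left-hand side into
$$
\sum_{X < N \le X+H} R_{m, \, \textup{sq-full}}(N) = \sum_{b \textup{ squarefree}} \sum_{m \le X+H} \Lambda(m) \biggl( \Bigl\lfloor \sqrt{\tfrac{X+H-m}{b^{3}}} \Bigr\rfloor - \Bigl\lfloor \sqrt{\tfrac{X-m}{b^{3}}} \Bigr\rfloor \biggr),
$$
with the convention $\sqrt{y} := 0$ for $y < 0$, so that each bracket counts the positive integers $a$ with $X-m < a^{2} b^{3} \le X+H-m$. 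Each floor is then written as its argument minus the sawtooth $\psi(t) = \{t\} - \tfrac{1}{2}$, splitting the sum into a smooth main piece and an oscillating error.

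For the smooth piece I would Taylor expand
$\sqrt{(X-m+H)/b^{3}} - \sqrt{(X-m)/b^{3}} = \tfrac{H}{2 b^{3/2}\sqrt{X-m}} + O\bigl(\tfrac{H^{2}}{b^{3/2}(X-m)^{3/2}}\bigr)$
for $X-m \gg H$, and combine the resulting leading term with the identity $\sum_{b} \mu^{2}(b)/b^{3/2} = \zeta(3/2)/\zeta(3)$ and the prime number theorem in the form $\sum_{m \le X} \Lambda(m)/\sqrt{X-m} = 2\sqrt{X}\bigl(1 + O((\log X)^{-1})\bigr)$. This produces the predicted main term $\frac{\zeta(3/2)}{\zeta(3)} H X^{1/2}\bigl(1 + O((\log X)^{-1})\bigr)$, while the Taylor remainder together with the short range $X - m \lesssim H$ contribute acceptable errors of order $H^{3/2} + H X^{1/3}$, both of which are $o(HX^{1/2}/\log X)$ when $X^{1/2+\varepsilon} \le H \le X^{1-\varepsilon}$.

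The oscillating sawtooth piece must then be absorbed into this error. Expanding $\psi$ into its Fourier series $\psi(t) = -\sum_{h \ge 1} \sin(2 \pi h t)/(\pi h)$ and truncating at a suitable height $H_{0}$ reduces the task to bounding (imaginary parts of) bilinear sums of the shape
$$
\sum_{h \le H_{0}} \frac{1}{h} \sum_{b \le X^{1/3}} \mu^{2}(b) \sum_{m} \Lambda(m) \Bigl( e\bigl(h\sqrt{(X-m)/b^{3}}\bigr) - e\bigl(h\sqrt{(X+H-m)/b^{3}}\bigr) \Bigr),
$$
where $e(x) := e^{2\pi i x}$. Vaughan's identity would convert the $\Lambda$-sum into type I and type II bilinear sums, and the resulting oscillatory sums with phase $h\sqrt{(X-m)/b^{3}}$ can be attacked by van der Corput's derivative tests or exponent pair estimates, after a dyadic decomposition of the variables $h$, $b$, $m$.

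\emph{Main obstacle.} The decisive technical step is to extract savings of a power of $\log X$ uniformly across these dyadic ranges. The hardest regime is the one in which the second derivative of the phase in $m$ is close to unit size: there, van der Corput's method is at its weakest, and a careful balance has to be struck between the small-$b$ range (where the sum over $a$ is long and standard quadratic exponential sum bounds apply) and the large-$b$ range (where the $b$-variable must be exploited directly). This balancing is what determines the threshold $H \ge X^{1/2+\varepsilon}$, matching the density $Q(X) \sim X^{1/2}$ of square-full numbers.
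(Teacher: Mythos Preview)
Your treatment of the smooth piece is essentially correct and does recover the main term $\frac{\zeta(3/2)}{\zeta(3)}HX^{1/2}$ with the stated accuracy. The gap is in the sawtooth piece.

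What you propose there --- Fourier-expanding $\psi$, inserting Vaughan's identity for $\Lambda$, and attacking exponential sums with phase $h\sqrt{(X-m)/b^{3}}$ by van der Corput --- amounts to reproving, from scratch, a statement about primes in short intervals by pure exponential-sum methods. You do not explain why the type~II sums would yield any saving in the critical ranges; indeed you concede this yourself (``van der Corput's method is at its weakest''). A trivial count already shows that the sawtooth contribution, before any cancellation, is of size $X^{4/3}$, so you need a genuine saving of $X^{1/3-\varepsilon}$ uniformly across all dyadic boxes in $(h,b,m)$, and nothing in the outline indicates how this is achieved. There is no a priori reason why this route would produce the threshold $H\ge X^{1/2+\varepsilon}$.

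The paper bypasses the whole issue by switching to the other variable. Writing
\[
\sum_{X<N\le X+H} R_{m,\,\textup{sq-full}}(N)
=\sum_{\substack{f\le X+H\\ f\in\mathscr{Q}}}\ \sum_{X-f<p\le X+H-f}\log p
\]
turns the problem into an average, over the $\asymp X^{1/2}$ square-full integers $f\le X$, of the deviation $\vartheta(t+H)-\vartheta(t)-H$. After truncating the representation $f=a^{2}b^{3}$ to $b\le B=(\log X)^{4}$, each square interval $[m^{2},(m+1)^{2})$ contains at most $O(B)$ admissible $f$, so the sum over $f$ is dominated, up to a power of $\log X$, by
\[
\int_{0}^{\sqrt{X}}\Bigl|\vartheta(X-u^{2}+H)-\vartheta(X-u^{2})-H\Bigr|\,du,
\]
and Cauchy--Schwarz together with the Huxley--Ingham mean-value estimate
\[
\int_{Y}^{2Y}\bigl|\vartheta(t+H)-\vartheta(t)-H\bigr|^{2}\,dt\ll H^{2}Y(\log Y)^{-A}\qquad(H\ge Y^{1/6+\varepsilon})
\]
finishes the error term at once. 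The restriction $H\ge X^{1/2+\varepsilon}$ arises not from any exponential-sum balancing but from the secondary error terms ($X(\log X)^{O(1)}$ from passing from $\Lambda$ to $\log p$, and $H^{3/2}$ from boundary ranges of $f$), which must be $\ll HX^{1/2}(\log X)^{-1}$.

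In short, the missing idea is to invoke the known second-moment bound for primes in short intervals as a black box, rather than to manufacture cancellation in a $\Lambda$-weighted exponential sum by hand.
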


\section{Proof of the main theorem}
	Let $\mathbbm{1}_{P}$ be a characteristic function for some condition $P$.
	\begin{Lem} \label{Lem_chara}
		For a positive number f, we have
		$$
			\mathbbm{1}_{f \in \mathscr{Q}} = \sum_{a^{2}b^{3} = f} \mu(b)^{2},
		$$
		where $\mu(b)$ is the M\"{o}bius function.
	\end{Lem}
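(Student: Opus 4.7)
My plan is to reduce the identity to a check at prime powers. Both sides are multiplicative functions of $f$: the indicator $\mathbbm{1}_{f \in \mathscr{Q}}$ factors across the prime-power decomposition of $f$ because $f$ is square-full if and only if every prime appearing in $f$ appears with multiplicity at least $2$, while the right-hand side is the Dirichlet-type convolution $(\mathbbm{1} \cdot \square^{-1}) \ast (\mu^2 \cdot \square^{-2})$ of multiplicative functions (here $\square^{-k}$ just indicates indexing by the $k$-th power). Hence it is enough to verify the equality for $f = p^e$ with $p$ prime and $e \ge 0$.

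At a prime power $f = p^{e}$, the representations $a^{2} b^{3} = p^{e}$ with $\mu(b)^{2} = 1$ are in bijection with the pairs $(\alpha, \beta)$ satisfying $2\alpha + 3\beta = e$, $\alpha \ge 0$, $\beta \in \{0, 1\}$, via $a = p^{\alpha}$, $b = p^{\beta}$. I would then run through the cases: for $e = 0$ only $(\alpha, \beta) = (0,0)$ works, for $e = 1$ there is no solution, and for $e \ge 2$ exactly one of the parities $\beta = 0$ (if $e$ is even) or $\beta = 1$ (if $e$ is odd, forcing $e \ge 3$ so that $\alpha = (e-3)/2 \ge 0$) produces a single admissible pair. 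Thus the sum equals $1$ precisely when $e = 0$ or $e \ge 2$, which is exactly the condition $p^{e} \in \mathscr{Q}$.

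There is really no obstacle here; the only point that deserves a line of care is the uniqueness of the representation $f = a^{2} b^{3}$ with $b$ squarefree, which is immediate from the prime-by-prime analysis above since the parity of $e$ pins down $\beta$ and then $\alpha$ is determined. Combining the two paragraphs gives the lemma.
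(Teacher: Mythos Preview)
Your proof is correct. Both your argument and the paper's hinge on the same prime-by-prime parity observation, but the packaging differs: the paper proceeds by direct construction, writing a square-full $f$ explicitly as $f = a^{2}b^{3}$ by separating the primes of even and odd exponent and peeling off a single $p^{3}$ from each odd-exponent prime. You instead invoke multiplicativity of both sides (the right-hand side being a Dirichlet convolution of the indicator of squares with $\mu^{2}$ supported on cubes) to reduce to prime powers, and then run the case analysis on $e$. Your route has the small advantage of handling the non-square-full case and the uniqueness of the decomposition uniformly in the same case split, points on which the paper's write-up is a bit elliptic; the paper's route is slightly more concrete in that it hands you the actual $a$ and $b$. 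Either way the content is the same.
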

	\begin{proof}
		This is well-known. 
		It suffices to prove that any square-full number $f$ can be written uniquely as $f = a^{2}b^{3}$ with a positive integer $a$ and a square-free number $b$.
		For positive numbers $n$ and $d$, we write
		$$
			d \, || \, n 
			\quad \mbox{if} \quad 
			d \, | n \hspace{0.5em} \mbox{and} \hspace{0.5em} \Big( d, \, \frac{n}{d} \, \Big) = 1 .
		$$
		Then, we have
		$$
				f = \prod_{p^{v} || f} p^{v} 
				  = \prod_{\substack{p^{v} || f \\ v : \textup{even}}} p^{v} \prod_{\substack{p^{v} || f \\ v : \textup{odd}}} p^{v}.
		$$
		If $v$ is odd in $p^{v} || f$, we have $v \ge 3$ because $f$ is square-full. This gives,
		$$
			f = \prod_{\substack{p^{v} || f \\ v : \textup{even}}} p^{v} \prod_{\substack{p^{v} || f \\ v : \textup{odd}}} p^{v-3} \bigg( \prod_{\substack{p^{v} || f \\ v : \textup{odd}}} p \bigg)^{3}.
		$$
		Then 
		$\displaystyle \prod_{\substack{p^{v} || f \\ v : \textup{even}}} p^{v} \prod_{\substack{p^{v} || f \\ v : \textup{odd}}} p^{v-3}$ 
		is square because $v-3$ is even when $v$ is odd and
		$\displaystyle \prod_{\substack{p^{v} || f \\ v : \textup{odd}}} p$ 
		is square-free because the exponent of each prime factor $p$ is $1$.	
	\end{proof}
	
	For a real number $B \ge 2$, we define
	\begin{equation} \label{R_{B}}
			R_{B}(N) : = \sum_{\substack{m + a^{2}b^{3} = N \\ b \le B}} \Lambda(m) \mu(b)^{2}.
	\end{equation}
	
	\begin{Lem} \label{Lem_R(N)=R_{B}(N)+(error)}
		For a positive number $N$ and a real number $B \ge 1$, we have
		$$
			R_{m, \, \textup{sq-full}}(N) = R_{B}(N) + O( (N^{\frac{1}{2}} \log N ) B^{- \frac{1}{2}} ).
		$$
	\end{Lem}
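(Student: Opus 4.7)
The plan is to expand $R_{m,\textup{sq-full}}(N)$ using Lemma \ref{Lem_chara}, then identify $R_B(N)$ as the contribution from small $b$ and bound the remainder by a crude counting argument.

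First, I would apply Lemma \ref{Lem_chara} to the square-full indicator inside the sum
$$
R_{m,\textup{sq-full}}(N)
= \sum_{m+f=N} \Lambda(m)\, \mathbbm{1}_{f\in\mathscr{Q}}
= \sum_{m+a^{2}b^{3}=N} \Lambda(m)\, \mu(b)^{2},
$$
where the sum is over positive integers $m$, $a$, $b$. Comparing with the definition \eqref{R_{B}} of $R_{B}(N)$, I would split the sum according to whether $b\le B$ or $b>B$, so that
$$
R_{m,\textup{sq-full}}(N) - R_{B}(N)
= \sum_{\substack{m+a^{2}b^{3}=N \\ b>B}} \Lambda(m)\, \mu(b)^{2}.
$$

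Next I would bound this tail by triangle inequality: since $\Lambda(m)\le \log m \le \log N$ and $\mu(b)^{2}\le 1$, the tail is at most
$$
(\log N)\, \#\bigl\{(a,b)\in\mathbb{N}^{2} : a^{2}b^{3}\le N,\ b>B\bigr\}.
$$
For each admissible $b$, the number of valid $a$ is at most $(N/b^{3})^{1/2}$, so the cardinality is
$$
\le \sum_{b>B} (N/b^{3})^{1/2} = N^{1/2} \sum_{b>B} b^{-3/2} \ll N^{1/2} B^{-1/2},
$$
by comparison with an integral. This yields the claimed bound $O\bigl((N^{1/2}\log N) B^{-1/2}\bigr)$.

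There is no real obstacle here; the argument is a clean decomposition plus a divisor-type count. The only thing to watch is that the tail sum $\sum_{b>B} b^{-3/2}$ is convergent (which is exactly why $R_{B}(N)$ captures $R_{m,\textup{sq-full}}(N)$ up to a truncation error decaying as $B\to\infty$), and that the implied constant in the final estimate is absolute, since we used only trivial bounds $\Lambda(m)\le \log N$ and $\mu(b)^{2}\le 1$.
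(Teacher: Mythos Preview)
Your proof is correct and follows essentially the same approach as the paper: expand via Lemma~\ref{Lem_chara}, split at $b=B$, bound $\Lambda(m)\le\log N$ and $\mu(b)^2\le 1$, and count pairs $(a,b)$ with $a^{2}b^{3}\le N$ and $b>B$ via $\sum_{b>B}(N/b^{3})^{1/2}\ll N^{1/2}B^{-1/2}$. The paper's argument is line-for-line the same.
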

	\begin{proof}
		Using Lemma \ref{Lem_chara}, we have 
		\begin{equation*}
			\begin{split}
				R_{m, \, \textup{sq-full}}(N) & = \sum_{\substack{m+f=N \\ f \in \mathscr{Q} }} \Lambda(m) 
						   = \sum_{m + a^{2}b^{3} = N} \Lambda(m) \mu(b)^{2}.
			\end{split}
		\end{equation*}
		The difference between $R_{m, \, \textup{sq-full}}(N)$ and $R_{B}(N)$ can be estimated as
		\begin{align*} 
			R_{m, \, \textup{sq-full}}(N) - R_{B}(N) &= \sum_{\substack{m+a^{2}b^{3}=N \\ b > B}} \Lambda(m) \mu(b)^{2} \\
							&\le \log N \sum_{\substack{m+a^{2}b^{3}=N \\ b > B}} 1 \\
							&= \log N \sum_{\substack{a^{2}b^{3} \le N \\ b > B}} 1 \\
							&= \log N \sum_{B < b \le N^{\frac{1}{3}}} \sum_{a^{2} \le \frac{N}{b^{3}}} 1 \\
							&\le N^{\frac{1}{2}} \log N \sum_{B < b} b^{- \frac{3}{2}} \\ 
							&\ll (N^{\frac{1}{2}} \log N) B^{-\frac{1}{2}}.
		\end{align*}
		This completes the proof.
	\end{proof}
	
	By Lemma \ref{Lem_R(N)=R_{B}(N)+(error)}, we have
	$$
		\sum_{X < N \le X+H} R_{m, \, \textup{sq-full}}(N) = \sum_{X \le N \le X+H} R_{B}(N) + O \Bigg( \sum_{X < N \le X+H} (N^{\frac{1}{2}} \log N ) B^{- \frac{1}{2}} \Bigg).
	$$
	If $B = (\log X)^{4}$, we have
	$$
		\sum_{X < N \le X+H} R_{m, \, \textup{sq-full}}(N) = \sum_{X \le N \le X+H} R_{B}(N) + O( HX^{\frac{1}{2}}(\log X)^{-1}).
	$$
	The following lemma is well-known.
	\begin{Lem} \label{Lem_cheby}
		For a real number $x$ such that $x \ge 2$, we have
		$$
			\sum_{m \le x} \Lambda(m) = \sum_{p \le x} \log p + O(x^{\frac{1}{2}} (\log x)^{2}).
		$$
	\end{Lem}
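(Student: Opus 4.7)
This is the classical bound expressing that the contribution of proper prime powers to $\psi(x) := \sum_{m \le x} \Lambda(m)$ is of lower order. By the definition of the von Mangoldt function, the only $m$ contributing to $\psi(x)$ are of the form $m = p^{k}$ with $p$ prime and $k \ge 1$, each contributing $\log p$. Separating the $k = 1$ term from the $k \ge 2$ terms, one writes
$$
\sum_{m \le x} \Lambda(m) \;=\; \sum_{p \le x} \log p \;+\; \sum_{k \ge 2} \sum_{p \le x^{1/k}} \log p,
$$
so the task reduces to bounding the double sum by $O(x^{\frac{1}{2}}(\log x)^{2})$.

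For this tail, I would use that the condition $p^{k} \le x$ with $p \ge 2$ forces $k \le (\log x)/\log 2$, so the outer sum has at most $O(\log x)$ terms, and that the inner sum is bounded by the elementary estimate $\sum_{p \le y} \log p \le y \log y$ (which follows by $\log p \le \log y$ and $\pi(y) \le y$; Chebyshev's $\theta(y) \ll y$ would also suffice). Treating $k = 2$ separately gives a contribution of $O(x^{\frac{1}{2}} \log x)$, while the remaining range $3 \le k \le \log_{2} x$ contributes at most $O(\log x)$ copies of $O(x^{\frac{1}{3}} \log x)$, totaling $O(x^{\frac{1}{3}}(\log x)^{2})$. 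Summing the two contributions yields the claimed error term. There is no real obstacle here: the identity comes straight from the definition of $\Lambda$, the number of $k$'s is logarithmic, and the bound has so much slack that even the trivial Chebyshev-type estimate is enough.
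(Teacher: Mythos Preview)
Your argument is correct and is the standard way to establish this classical estimate $\psi(x)-\theta(x)\ll x^{1/2}(\log x)^{2}$. The paper itself offers no proof at all: it merely states the lemma as ``well-known,'' so there is nothing to compare against beyond noting that your sketch is exactly the textbook justification one would expect.
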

	
	Let $\mathscr{Q}_{B} := \{ n \in \mathbb{N} \, | \, n=a^{2}b^{3}, b \le B \mbox{ and } b \mbox{ is square-free} \}$. Then from (\ref{R_{B}})
	$$
		\sum_{X < N \le X+H} R_{B}(N) 
		= \sum_{\substack{X < m+a^{2}b^{3} \le X+H \\ b \le B}} \Lambda(m) \mu(b)^{2} 
		= \sum_{\substack{X < m+f \le X+H \\ f \in \mathscr{Q}_{B}}} \Lambda(m) .
	$$

	Using Lemma \ref{Lem_cheby}, we have
	\begin{align*}
		\sum_{X < N \le X+H} R_{B}(N) 
		=& \sum_{\substack{f \le X+H \\ f \in \mathscr{Q}_{B}}} \sum_{X-f < p \le X+H-f} \hspace{-1em} \log p 
			+ O \bigg( \sum_{\substack{f \le X+H \\ f \in \mathscr{Q}_{B}}} X^{\frac{1}{2}} (\log X)^{2} \bigg) \\
		=& \sum_{\substack{f \le X+H \\ f \in \mathscr{Q}_{B}}} \sum_{X-f < p \le X+H-f} \log p 
			\, + \, O(X(\log X)^{2}) \\
		=& \sum_{\substack{f \le X-2H \\ f \in \mathscr{Q}_{B}}} \sum_{X-f < p \le X+H-f} \log p \\
		 & \hspace{1em} + \hspace{-1em} \, \sum_{\substack{X-2H < f \le X+H \\ f \in \mathscr{Q}_{B}}} \sum_{X-f < p \le X+H-f} \hspace{-1em} \log p
			\, + \, O(X(\log X)^{2}).  \\
	\end{align*}
	For the second sum on the most-right-hand side can be bounded as
	\begin{equation*}
		\begin{split}
			\sum_{\substack{X-2H < f \le X+H \\ f \in \mathscr{Q}_{B}}} \sum_{X-f < p \le X+H-f} \log p 
			\ll & \,\, H \sum_{\substack{X-2H < f \le X+H \\ f \in \mathscr{Q}_{B}}} 1 \\
			\ll & \,\, H \sum_{b \le B} \sum_{X-2H < a^{2}b^{3} \le X+H} 1 \\
			\ll & \,\, H \sum_{b \le B} \bigg( \frac{H}{b^{3}} \bigg)^{\frac{1}{2}}
			\ll \,\, H^{\frac{3}{2}}. 
		\end{split}
	\end{equation*}
	Therefore, we get
	\begin{align}
		  & \sum_{X < N \le X+H} R_{B}(N) \notag \\
		=& \hspace{-0.5em} \sum_{\substack{f \le X-2H \\ f \in \mathscr{Q}_{B}}} \sum_{X-f < p \le X+H-f} \log p 
			\, + \, O(H^{\frac{3}{2}}B + X(\log X)^{2}) \notag \\
		=& \hspace{-0.5em} \sum_{m^{2} \le X-2H} \sum_{\substack{m^{2} \le f < (m+1)^{2} \\ f \le X-2H \\ f \in \mathscr{Q}_{B}}} \sum_{X-f < p \le X+H-f} \hspace{-2em} \log p 
			\, + \, O(H^{\frac{3}{2}} + X(\log X)^{2}).  \label{assign_cheby}
	\end{align}
	
	\begin{Lem} \label{Lem_int}
		For a positive integer $m$ and a square-full number $f$ with $m^{2} \le f < (m+1)^{2}$, we have
		$$
			\sum_{X-f < p \le X+H-f} \log p = \int_{m}^{m+1} \sum_{X-u^{2} < p \le X+H-u^{2}} \log p \, du + O(m\log X) .
		$$
	\end{Lem}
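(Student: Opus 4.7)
The plan is to treat the inner prime sum as a function of the shift parameter and to show that this function varies slowly enough over $u \in [m, m+1]$ that replacing the sum at $u^{2}$ by the sum at $f$ costs only $O(m \log X)$.

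First I would introduce the shorthand
\[
	S(y) := \sum_{X - y < p \le X + H - y} \log p,
\]
so that the claimed identity becomes $S(f) = \int_{m}^{m+1} S(u^{2})\, du + O(m \log X)$. The integrand is a step function in $u$ (jumping at the primes), and the goal is really pointwise: show that $S(u^{2}) = S(f) + O(m \log X)$ uniformly for $u \in [m, m+1]$, and then integrate over the unit interval.

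The key observation is geometric. For $u \in [m, m+1]$ we have $u^{2} \in [m^{2}, (m+1)^{2}]$, and by hypothesis $f$ lies in the same interval, hence $|u^{2} - f| \le (m+1)^{2} - m^{2} = 2m + 1$. The two intervals $(X - f, X + H - f]$ and $(X - u^{2}, X + H - u^{2}]$ have the same length $H$ and are translates of each other by $u^{2} - f$, so their symmetric difference is contained in two intervals of total length at most $2|u^{2} - f| \le 2(2m + 1)$. The difference $S(u^{2}) - S(f)$ is therefore the signed sum of $\log p$ over primes $p$ lying in this symmetric difference, which by the trivial bound (at most one prime per integer, each contributing $\le \log(X + H)$) is $O(m \log X)$.

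From this pointwise comparison the lemma follows by integrating the estimate $S(u^{2}) = S(f) + O(m \log X)$ over $u \in [m, m+1]$, since the error bound is uniform in $u$ and the interval has length one. There is no real obstacle here: the argument relies only on the trivial prime-counting bound on short intervals, which is admissible because the length $2m + 1$ is at worst $O(X^{1/2})$ and we only need to absorb it into $O(m \log X)$. The only point requiring a small amount of care is the bookkeeping for the symmetric difference when $u^{2} < f$ versus $u^{2} > f$, but in both cases the bound on its total length is the same.
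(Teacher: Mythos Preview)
Your proposal is correct and follows essentially the same approach as the paper: both establish the pointwise estimate $S(u^{2}) = S(f) + O(m\log X)$ for $u\in[m,m+1]$ by bounding the prime sum over the symmetric difference of the two shifted intervals (whose total length is $O(m)$), and then integrate over the unit interval. The paper carries out the case split $u \le f^{1/2}$ versus $u \ge f^{1/2}$ explicitly, while you package the same computation in symmetric-difference language, but the arguments are otherwise identical.
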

	\begin{proof}
		When $u \in [m, f^{\frac{1}{2}} ]$, we have 
		$$
			X-f < X-u^{2} \quad \mbox{and} \quad X+H-f < X+H-u^{2}.
		$$ 
		By recalling $f < (m+1)^{2}$, we have
		\begin{equation*}
			\begin{split}
				    & \bigg| \sum_{X-f < p \le X+H-f} \log p \quad - \sum_{X-u^{2} < p \le X+H-u^{2}} \log p \bigg| \\
				\le & \sum_{X-f < p \le X-u^{2}} \log p \quad+ \sum_{X+H-f < p \le X+H-u^{2}} \log p \\
				\ll & (f - u^{2} + 1) \log X \\
				\ll & ( (m+1)^{2} -m^{2} ) \log X \\
				\ll & m\log X.
			\end{split}
		\end{equation*}
		Similarly, if $u \in [f^{\frac{1}{2}}, m+1]$, we have 
		$$
			X-u^{2} < X-f \quad \mbox{and} \quad X+H-u^{2} < X+H-f
		$$
		and so
		\begin{equation*}
			\begin{split}
				    & \bigg| \sum_{X-f < p \le X+H-f} \log p \quad - \sum_{X-u^{2} < p \le X+H-u^{2}} \log p \bigg| \\
				\le & \sum_{X-u^{2} < p \le X-f} \log p \quad+ \sum_{X+H-u^{2} < p \le X+H-f} \log p \\
				\ll & (u^{2} - f+1) \log X \\
				\ll & ( (m+1)^{2} -m^{2} ) \log X \\
				\ll & m\log X.
			\end{split}
		\end{equation*}
		In any case, we thus have
		$$
			\sum_{X-f < p \le X+H-f} \log p = \sum_{X-u^{2} < p \le X+H-u^{2}} \log p + O(m\log X)
		$$
		for $m \le u \le m+1$. 
		By integrating this formula over $m \le u \le m+1$, we give the lemma.
		
	\end{proof}
	
	Substituting Lemma \ref{Lem_int} into (\ref{assign_cheby}) then, we have
	\begin{align*}
		 & \sum_{X < N \le X+H} \hspace{-0.5em} R_{B}(N) \\
		=& \sum_{m^{2} \le X-2H} \sum_{\substack{m^{2} \le f < (m+1)^{2} \\ f \le X-2H \\ f \in \mathscr{Q}_{B}}} \bigg( \int_{m}^{m+1} \sum_{X-u^{2} < p \le X+H-u^{2}} \log p \, du + O(m\log X) \bigg) \\
		 & \hspace{20em} + O( H^{\frac{3}{2}} + X(\log X)^{2}). \\
	\end{align*}
	By estimating the contribution of $O(m\log X)$ as
	$$
		\sum_{m^{2} \le X-2H} \sum_{\substack{m^{2} \le f < (m+1)^{2} \\ f \le X-2H \\ f \in \mathscr{Q}_{B}}} m\log X
		\ll X^{\frac{1}{2}}\log X \sum_{\substack{f \le X-2H \\ f \in \mathscr{Q}_{B}}} 1
		\ll X\log X.
	$$
	We obtain
	\begin{align}
		 & \sum_{X < N \le X+H} \hspace{-0.5em} R_{B}(N) \notag \\
		=& \sum_{m^{2} \le X-2H} \sum_{\substack{m^{2} \le f < (m+1)^{2} \\ f \le X-2H \\ f \in \mathscr{Q}_{B}}} \int_{m}^{m+1} \bigg( \sum_{X-u^{2} < p \le X+H-u^{2}} \hspace{-1.5em} \log p - H \bigg) \, du \notag \\
		 & \hspace{4em} + \sum_{m^{2} \le X-2H} \sum_{\substack{m^{2} \le f < (m+1)^{2} \\ f \le X-2H \\ f \in \mathscr{Q}_{B}}} H 
		  + \, O \Big(H^{\frac{3}{2}} + X(\log X)^{5} \Big). \label{H}
	\end{align}
	Put the first term to be $\Sigma_{1}$ and the second term to be $\Sigma_{2}$.
	
	First, we consider $\Sigma_{1}$.
	Before evaluating $\Sigma_{1}$, we first introduce following the lemma and the theorem.
	
	\begin{Lem} For $m^{2} \le X-2H$, we have \label{Lem_B}
		$$
		\sum_{\substack{m^{2} \le f < (m+1)^{2} \\ f \le X-2H \\ f \in \mathscr{Q}_{B}}} 1 
		\ll B.
		$$
	\end{Lem}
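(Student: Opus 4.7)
The plan is to unpack the definition $\mathscr{Q}_B = \{a^2 b^3 : a \ge 1,\ b \le B,\ b \text{ square-free}\}$ and estimate the count by fibering over the cube parameter $b$. Writing $f = a^2 b^3$, the condition $m^2 \le f < (m+1)^2$ rewrites as $m/b^{3/2} \le a < (m+1)/b^{3/2}$, so for each fixed $b \le B$ the admissible $a$ must lie in a half-open interval of length exactly $1/b^{3/2} \le 1$.

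An interval of length at most $1$ contains at most one integer, so the inner count over $a$ is $\le 1$ for every admissible $b$. Summing trivially over the square-free $b \le B$ then yields $\ll B$, as desired. The extra restriction $f \le X - 2H$ only further shrinks the set being counted, so it may be dropped for the upper bound; likewise the square-free condition on $b$ is not needed for the estimate and simply comes along for free.

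There is essentially no obstacle here: the lemma is a one-line elementary counting estimate. The key observation is that the factor $b^3$ is sparse enough that the values $a^2 b^3$ (for fixed $b$) land in the window $[m^2, (m+1)^2)$ of width $2m+1$ at most once per $b$, since after rescaling by $b^{3/2}$ the width of the corresponding window for $a$ is $1/b^{3/2} \le 1$. No deeper input such as divisor bounds, spacing estimates for square-full numbers, or a Filaseta--Trifonov type result is needed; this is what makes it possible to handle the tail sum cheaply with $B = (\log X)^4$ as chosen earlier.
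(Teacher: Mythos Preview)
Your proof is correct and follows essentially the same approach as the paper: both fiber the count over $b \le B$, rewrite $m^2 \le a^2 b^3 < (m+1)^2$ as $m/b^{3/2} \le a < (m+1)/b^{3/2}$, and bound the number of integers $a$ in this short interval. Your observation that the interval has length $1/b^{3/2} \le 1$ and hence contains at most one integer is in fact slightly cleaner than the paper's version, which writes the inner count as $\lfloor (m+1)/b^{3/2}\rfloor - \lfloor m/b^{3/2}\rfloor = b^{-3/2} + O(1)$ before summing.
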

	
	\begin{proof}
		From the definition of $\mathscr{Q}_{B}$, we have
		\begin{equation*}
			\begin{split}
				 \sum_{\substack{m^{2} \le f < (m+1)^{2} \\ f \le X-2H \\ f \in \mathscr{Q}_{B}}} 1
				&=   \sum_{\substack{m^{2} \le a^{2}b^{3} < (m+1)^{2} \\ b < B}} \mu(b)^{2} \\
				&=   \sum_{b \le  B} \sum_{\frac{m^{2}}{b^{3}} \le a^{2} < \frac{(m+1)^{2}}{b^{3}}} 1 \\
				&=   \sum_{b \le  B} \bigg( \bigg[ \frac{m+1}{b^{\frac{3}{2}}} \bigg] + \bigg[ \frac{m}{b^{\frac{3}{2}}} \bigg] \bigg) \\
				&=   \sum_{b \le  B} \bigg( b^{- \frac{3}{2}} +O(1) \bigg) \\
				&\ll \sum_{b \le  B} 1
				 = B.
			\end{split}
		\end{equation*}
	\end{proof}
	
	\begin{Thm} {\rm (\cite{H})}. \label{Lem_H}
		Let $A$ be any real positive number. For $X^{\frac{1}{6} + \varepsilon} \le H \le X$, we have
		$$
			\int_{X}^{2X} \bigg| \sum_{u < p \le u+H} \log p - H \bigg|^{2} du \ll H^{2} X (\log X)^{-A},
		$$
		where the implicit constant depends on $A$ and $\varepsilon$.
	\end{Thm}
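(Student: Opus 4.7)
The plan is the classical Huxley approach via the explicit formula and zero-density estimates. First, pass from the prime-weighted sum to $\psi$: since the contribution of prime powers $p^k$ with $k \ge 2$ to $\psi(u+H) - \psi(u) - \sum_{u < p \le u+H} \log p$ is $O((HX^{-1/2} + 1)\log X)$ uniformly in $u \in [X,2X]$, squaring and integrating produces an error absorbed into the right-hand side, so it suffices to bound
\[
I := \int_X^{2X} |\psi(u+H) - \psi(u) - H|^2\, du.
\]
Next, apply the truncated explicit formula: for $2 \le T \le X$,
\[
\psi(u+H) - \psi(u) - H = -\sum_{|\gamma| \le T} \frac{(u+H)^\rho - u^\rho}{\rho} + O\!\left(\frac{X \log^2 X}{T}\right),
\]
where $\rho = \beta + i\gamma$ runs over the nontrivial zeros of $\zeta$. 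Choose $T$ of size roughly $(X/H)(\log X)^{A+5}$ so that, on squaring and integrating, the error term contributes $O(H^2 X (\log X)^{-A-O(1)})$ to $I$.

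The main step is to estimate the mean square of the zero-sum. Write
\[
\frac{(u+H)^\rho - u^\rho}{\rho} = \int_u^{u+H} v^{\rho - 1}\, dv,
\]
and expand the square; the oscillatory estimate
\[
\int_X^{2X} v^{\rho + \overline{\rho'} - 2}\, dv \ll \frac{X^{\beta + \beta' - 1}}{1 + |\gamma - \gamma'|}
\]
(by a single integration by parts) reduces $I$ to a weighted double sum over the pairs $(\rho,\rho')$. After a dyadic decomposition in both $\beta$ and $|\gamma|$, apply Huxley's zero-density estimate
\[
N(\sigma, T) \ll T^{\frac{12}{5}(1 - \sigma)}(\log T)^{C}, \qquad \tfrac{1}{2} \le \sigma \le 1,
\]
combined with the Vinogradov--Korobov zero-free region; the latter supplies the $(\log X)^{-A}$ saving by forcing every relevant $\rho$ into $\beta \le 1 - c(\log X)^{-2/3}(\log \log X)^{-1/3}$, while the former controls the size of the double sum throughout the critical strip.

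The main obstacle is the density-exponent balance in the range $\tfrac{1}{2} \le \beta \le 1$. The zeros near $\beta = 3/4$ produce the decisive contribution, and it is precisely the Huxley exponent $12/5$ at $\sigma = 3/4$ that dictates the threshold $H \ge X^{1/6 + \varepsilon}$: for smaller $H$ the dyadic block containing these zeros cannot be summed down to meet the target $H^2 X (\log X)^{-A}$. Extending the range of $H$ would require a stronger density estimate near $\sigma = 3/4$ or a new mean-value input for the Dirichlet polynomial $\sum_\rho v^{\rho - 1}$ supported on zeta zeros.
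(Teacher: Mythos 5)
Your outline is the same standard argument the paper itself invokes: the paper's ``proof'' of this theorem is a one-line appeal to Perron's formula (equivalently, the truncated explicit formula) together with the Ingham--Huxley zero-density estimates, and your sketch simply fills in the usual details of that route, correctly identifying that the density exponent $\tfrac{12}{5}$ forces $T\ll X^{5/6}$ and hence $H\ge X^{1/6+\varepsilon}$, and that a Vinogradov--Korobov-type zero-free region supplies the arbitrary $(\log X)^{-A}$ saving. No substantive difference from the paper's intended proof.
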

	\begin{proof}
		This follows by a standard argument using Perron's formula and the zero-density estimate of Ingham \cite{I} and Huxley \cite{H}.
	\end{proof}
	
	We now evaluate $\Sigma_{1}$.
	Lemma \ref{Lem_B} gives
	\begin{equation*}
		\begin{split}
			 \Sigma_{1}
			=   & \sum_{m^{2} \le X-2H} \sum_{\substack{m^{2} \le f < (m+1)^{2} \\ f \le X-2H \\ f \in \mathscr{Q}_{B}}} \int_{m}^{m+1} \bigg( \sum_{X-u^{2} < p \le X+H-u^{2}} \log p - H \bigg) \, du \\
			\ll & B \int_{1}^{[ \sqrt{X-2H} ] +1} \bigg| \sum_{X-u^{2} < p \le X+H-u^{2}} \log p - H \bigg| \, du \\
			\ll & B \int_{X - ([\sqrt{X-2H}]+1)^{2}}^{X-1} \bigg| \sum_{t < p \le t+H} \log p -H \bigg| \bigg( \frac{1}{2\sqrt{X-t}} \bigg) \, dt. 
		\end{split}
	\end{equation*}
	
	When $X^{\frac{1}{2} + \varepsilon} \le H$, we have $X - ([\sqrt{X-2H}]+1)^{2} \ge H$ and 
	\begin{equation*}
		\begin{split}
			 \Sigma_{1}
			\ll & B \int_{H}^{X-1} \bigg| \sum_{t < p \le t+H} \log p -H \bigg| \bigg( \frac{1}{2\sqrt{X-t}} \bigg) \, dt.
		\end{split}
	\end{equation*}
	
	From Cauchy-Schwarz inequality,
	\begin{equation*}
		\begin{split}
			 \Sigma_{1}
			\ll & B  \bigg( \bigg( \int_{H}^{X-1} \bigg( \sum_{t < p \le t+H} \log p -H \bigg)^{2} \, dt \bigg) \bigg( \int_{H}^{X-1} \bigg( \frac{1}{\sqrt{X-t}} \bigg)^{2} \, dt \bigg) \bigg)^{\frac{1}{2}} \\
			\ll & B \bigg( \bigg( \int_{H}^{X-1} \bigg( \sum_{t < p \le t+H} \log p -H \bigg)^{2} \, dt \bigg) \bigg)^{\frac{1}{2}} (\log X)^{\frac{1}{2}}. 
		\end{split}
	\end{equation*}
	
	Let $K$ be a positive number such that $2^{K}H \le X-1 < 2^{K+1}H$.
	Then,
	\begin{equation*}
		\begin{split}
			 \Sigma_{1}
			\ll & B \bigg( \bigg( \int_{H}^{2^{K+1}H} \bigg( \sum_{t < p \le t+H} \log p -H \bigg)^{2} \, dt \bigg) \bigg)^{\frac{1}{2}} (\log X)^{\frac{1}{2}} \\
			\ll & B \bigg(\sum_{k=0}^{K} \bigg( \int_{2^{k}H}^{2^{k+1}H} \bigg( \sum_{t < p \le t+H} \log p -H \bigg)^{2} \, dt \bigg) \bigg)^{\frac{1}{2}} (\log X)^{\frac{1}{2}}.
		\end{split}
	\end{equation*}
	
	Since $H \ge X^{\frac{1}{2}} + \varepsilon$ and $2^{k}H \le X$ in the last integrals, we may use Theorem \ref{Lem_H}, to get
	\begin{equation*}
		\begin{split}
			 \Sigma_{1}
			\ll & B \bigg(\sum_{k=0}^{K} \bigg( 2^{k}H^{3} (\log X)^{-A} \bigg) \bigg)^{\frac{1}{2}} (\log X)^{\frac{1}{2}} \\
			\ll & B \bigg( 2^{K}KH^{3} (\log X)^{-A} \bigg)^{\frac{1}{2}} (\log X)^{\frac{1}{2}} \\
			\ll & HX^{\frac{1}{2}}(\log X)^{-A}, 
		\end{split}
	\end{equation*}
	where we replaced the value of $A$ appropriately at the last line.
	
	Next, we consider $\Sigma_{2}$.
	We first rewrite the sum as
	\begin{align}
		\Sigma_{2} 
		= \sum_{m^{2} \le X-2H} \sum_{\substack{m^{2} \le f < (m+1)^{2} \\ f \le X-2H \\ f \in \mathscr{Q}_{B}}} H \,
		= \, \sum_{\substack{f \le X-2H \\ f \in \mathscr{Q}_{B}}} H \,
		= \, \sum_{\substack{f \le X-2H \\ f \in \mathscr{Q}}} H -\sum_{\substack{f \le X-2H \\ f \in \mathscr{Q} \backslash \mathscr{Q}_{B}}} H. \label{Sigma2}
	\end{align}
	
	By using Theorem $\ref{Thm_B-G}$, the first term in (\ref{Sigma2}) is
	\begin{equation*}
		\begin{split}
			\sum_{\substack{f \le X-2H \\ f \in \mathscr{Q}}} H 
			&= \frac{\zeta(\frac{3}{2})}{\zeta(3)}H(X-2H)^{\frac{1}{2}} + O(HX^{\frac{1}{2}}(\log X)^{-1}) \\
			&= \frac{\zeta(\frac{3}{2})}{\zeta(3)}HX^{\frac{1}{2}} + \frac{\zeta(\frac{3}{2})}{\zeta(3)}H \Big( (X-2H)^{\frac{1}{2}} - X^{\frac{1}{2}} \Big) + O(HX^{\frac{1}{2}}(\log X)^{-1}) \\
			&= \frac{\zeta(\frac{3}{2})}{\zeta(3)}HX^{\frac{1}{2}} + O \Big( H^{2}X^{-\frac{1}{2}} + HX^{\frac{1}{2}}(\log X)^{-1} \Big).
		\end{split}
	\end{equation*}
	
	The second term in (\ref{Sigma2}) is
	\begin{equation*}
		\begin{split}
			\sum_{\substack{f \le X-2H \\ f \in \mathscr{Q} \backslash \mathscr{Q}_{B}}} H
			&\ll \sum_{\substack{a^{2}b^{3} \le X \\ B < b}} \mu(b)^{2} H \\
			&\ll \sum_{B < b \le X^{\frac{1}{2}}} \sum_{a^{2} \le \frac{X}{b^{3}}} H \\
			&\ll \sum_{B < b} HX^{\frac{1}{2}} b^{-\frac{3}{2}} \\
			&\ll HX^{\frac{1}{2}} B^{-\frac{1}{2}} \\
			&\ll HX^{\frac{1}{2}}(\log X)^{-2}.
		\end{split}
	\end{equation*}
	
	These results give
	$$
		\Sigma_{2} = \frac{\zeta(\frac{3}{2})}{\zeta(3)}HX^{\frac{1}{2}} + O \Big( H^{2}X^{-\frac{1}{2}} + HX^{\frac{1}{2}}(\log X)^{-1} \Big).
	$$
	
	Then, (\ref{H}) becomes
	\begin{equation*}
		\begin{split}
			  & \sum_{X < N \le X+H} R_{B}(N) \\
			=& \Sigma_{1} + \Sigma_{2} + O \Big(H^{\frac{3}{2}}B + H^{2}X^{- \frac{1}{2}}(\log X)^{5} + X(\log X)^{5} \Big) \\
			=& \frac{\zeta(\frac{3}{2})}{\zeta(3)}HX^{\frac{1}{2}}  \\
			 & + O \Big( HX^{\frac{1}{2}}(\log X)^{-1} +H^{\frac{3}{2}}B + H^{2}X^{- \frac{1}{2}}(\log X)^{5} + X(\log X)^{5} \Big).
		\end{split}
	\end{equation*}

	Taking $X^{\frac{1}{2}+\varepsilon} \le H \le X^{1- \varepsilon}$ gives
		$$
			\sum_{X < N \le X+H} R_{B}(N) = \frac{\zeta(\frac{3}{2})}{\zeta(3)}HX^{\frac{1}{2}} \bigg( 1 + O((\log X)^{-1}) \bigg),
		$$
	which leads to Theorem \ref{main}.

\section*{Acknowledgements}
	The authors would like to thank Prof. Maki Nakasuji for her guidance and helpful advice.


\vspace{1em}

\begin{flushleft}
{\textsc{%
\small
Fumi Ogihara\\[.3em]
\footnotesize
Graduate School of Science and Technology, \\
Sophia University, \\
7-1 Kioicho, Chiyoda-ku, Tokyo 102-8554, Japan
}

\small
\textit{Email address}: \texttt{f-ogihara-8o8@eagle.sophia.ac.jp}
}
\end{flushleft}

\vspace{1em}

\begin{flushleft}
{\textsc{%
\small
Yuta Suzuki\\[.3em]
\footnotesize
Department of Mathematics, \\
Rikkyo University, \\
3-34-1 Nishi-Ikebukuro, Toshima-ku, Tokyo 171-8501, Japan.
}

\small
\textit{Email address}: \texttt{suzuyu@rikkyo.ac.jp}
}
\end{flushleft}

\end{document}